\def\authorsaddresses#1{\dedicatory{#1}}
\newtheorem{theorem}{Theorem}[section]
\newtheorem{lemma}[theorem]{Lemma}
\newtheorem{proposition}[theorem]{Proposition}
\theoremstyle{definition}
\numberwithin{equation}{section}
\begin{document}
\setcounter{page}{1}

\title[Tetravalent half-arc-transitive graphs of order $12p$]{Tetravalent half-arc-transitive graphs of order $12p$}

\author[ M. Ghasemi, A.A.  Talebi and    N. Mehdipoor]{ M. Ghasemi$^1$,   N. Mehdipoor$^2$ and A.A.  Talebi$^3$    }

\authorsaddresses {$^1$ Department of Mathematics, Urmia University, Urmia 57135, Iran.  email:  m.ghasemi@urmia.ac.ir\\
$^2$ Department of Mathematics, Mazandaran University, email:  nargesmehdipoor@yahoo.com
 \\$^3$  Department of Mathematics, Mazandaran University, email: a.talebi@umz.ac.ir}

\begin{abstract}
A graph is half-arc-transitive if its automorphism group acts transitively
on its vertex set, edge set, but not its arc set. In this paper, we study  all tetravalent half-arc-transitive graphs of
order $12p$.
 \end{abstract}
\keywords{Half-arc-transitive graph, Tightly attached, Regular covering projection, Solvable groups.}
\subjclass[2010]{05C25;20B25.}
\maketitle

\section{Introduction }
In this study, all graphs considered are assumed  to be
finite, simple and connected. For a graph $X$,
 $V(X)$, $E(X)$, $A(X)$ and  $\rm {Aut(X)}$  denote its vertex set, edge
set, arc set,  and full automorphism group, respectively.  For $u, v  \in  V(X)$,  $ \{u,v\} $ denotes  the
edge incident to $ u $ and $ v$ in $X$, and  $ N_{X}(u)$ denotes the
neighborhood of $u $ in $X$, that is, the set of vertices adjacent to
$ u $ in $X$.

A graph $\widetilde{X}$ is called a covering of a graph $X$ with
projection $p : \widetilde{X} \rightarrow X$ if there is
a surjection $ p : V( \widetilde{X})\rightarrow V(X)$ such that $
p|_{N_{\widetilde{X}}(\widetilde{v})} :
N{_{\widetilde{X}}}(\widetilde{v})\rightarrow  N_{X}(v) $ is a
bijection for any vertex $v  \in  V(X)$ and $\widetilde{v} \in
p^{-1}(v)$. A permutation group $G$ on a set $ \Omega $ is said to be
semiregular if the stabilizer $ G_{v} $ of $v$ in $G$ is trivial for
each $v  \in  \Omega $, and is  regular  if $G$ is transitive, and
semiregular. Let $K$ be a subgroup of $\rm {Aut(X)}$ such that $K$ is
intransitive on $V(X)$. The quotient graph $X/K$ induced by $K$ is defined
as the graph such that the set  $ \Omega $  of $K$-orbits in $V(X)$ is
the vertex set of $X/K$ and $B$, $C  \in    \Omega $  are adjacent if
and only if there exists a $u  \in  B$ and  $v \in C$ such that $
\{u,v\}  \in $  $E(X)$. A covering $ \widetilde{X} $ of $X$ with a
projection $p$ is said to be regular (or $K$-covering) if there is a
 subgroup $K$ of the automorphism group ${\rm Aut( \widetilde{X})}$  such that  $K$ is semiregular on both $V(\widetilde{X})$ and $E(\widetilde{X})$ and graph $X$ is isomorphic to the quotient graph
$\widetilde{X}/K$, say by $h$, and the quotient map $ \widetilde{X}
 \rightarrow  \widetilde{X} /K$ is the composition $ph$ of $p$
and $h$. The group of covering transformations $\rm {CT(p)}$ of $p : \widetilde{X} \rightarrow X$ is the group of all self equivalences of $p$, that is, of all automorphisms $ \widetilde{\alpha } \in {\rm  Aut(  \widetilde{X})}$ such that $p= \widetilde{\alpha}  p$. If
$\widetilde{X}$ is connected, $K$ becomes the covering transformation
group.

 For a graph $X$ and a subgroup $G$ of $\rm {Aut(X)}$, $X$ is said to
be $G$-vertex-transitive, $G$-edge-transitive or $G$-arc-transitive  if $G$
is transitive on $V(X)$, $E(X)$ or $A(X)$,
respectively, and $G$-arc-regular if $G$ acts regularly on $A(X)$. A graph $X$ is called vertex-transitive, edge-transitive, arc-transitive, or arc-regular
if $X$ is $\rm {Aut(X)}$-vertex-transitive, $\rm {Aut(X)}$-edge-transitive,
$\rm {Aut(X)}$-arc-transitive, or $\rm {Aut(X)}$-arc-regular, respectively.  Let $X$ be a tetravalent $G$-half-arc-transitive graph for a subgroup $G$ of $\rm {Aut(X)}$, that is $G$ acts transitively on $V(X)$, $E(X)$, but not  $A(X)$.
Then under the natural action of $G$ on $V (X) \times V (X)$, $G$ has two orbits on the arc set $A(X)$, say $A_{1}$ and $A_{2}$, where
$A_{2} = \{ (v,u)|(u,v) \in A_{1} \}$. Therefore, one may obtain two oriented graphs with the vertex set  $V (X)$  and  the arc sets $A_{1}$ and $A_{2}$. Assume that $D_{G}(X)$ be one of the two oriented graphs. Also in the special case, if $G=\rm{Aut(X)}$ then $X$ is said to be 1/2-transitive or half-arc-transitive.

By Tutte \cite{tut},   each connected  vertex-transitive and edge-transitive graph of odd valency is arc-transitive.  So  half-arc-transitive graphs of odd valency do not exist. Bouwer \cite{Bou} answered Tutte's question about existence of half-arc-transitive graphs of even valency.  A number of authors later studied the construction of these graphs. See, for example \cite{Ant, Als, con, Du, zhou, Huj, kuntar, kun, li, wa, wf, fff}.  Let $p$ be a prime.  There are no
half-arc-transitive graphs of order $p$, $p^{2}$ and $2p$ (see \cite{chao,ox}). Feng, Kwak, Wang
and Zhou \cite{feng} classified the connected tetravalent half-arc-transitive
graphs of order $2pq$  for distinct odd primes $p$ and $q$. The tetravalent half-arc-transitive graphs of order
$p^{5}$, $p^{4}$, $2p^{2}$, $p^{3}$ and $2p^{3}$  are classified in \cite{cheng, fng,  ff, xu, zz} respectively.  Wang  et al.   \cite{zw} studied
tetravalent half-arc-transitive graphs of order a product of three primes. In \cite{liu}, Liu studied tetravalent half-arc-transitive graphs of order $p^{2}q^{2}$ with $p$, $q$ distinct odd primes. Feng et al. \cite{wang} classified the tetravalent
half-arc-transitive graphs of order $4p$. In \cite{Cui} a complete classification  of tetravalent half-arc-transitive
metacirculants of order 2-powers was given. In \cite{wwl}, a classification  of all tetravalent half-arc-transitive graphs of
order $8p$ was given. In this paper, we will study tetravalent half-arc-transitive graphs of order $12p$.

\section{Preliminaries}
Let $X$ be a graph and $K$ be a finite group. By $a^{-1}$ we mean the
reverse arc to an arc $a$. A voltage assignment (or $K$-voltage
assignment) of $X$ is a function $\xi : A(X) \rightarrow  K$ with the
property that $\xi(a^{-1}) = \xi(a)^{-1}$ for each arc $a
\in A(X)$. The values of $\xi$ are called voltages, and $K$ is the
voltage group. The graph $ X  \times_{{\xi}} K$  derived from a
voltage assignment $\xi: A(X)\rightarrow K$ has vertex set $V(X)
\times K$ and edge set $E(X) \times K$, so that an edge $(e, g)$ of $X$
$\times$ K joins a vertex $(u, g)$ to $(v, \xi(a)g)$ for $a = (u, v)
\in A(X)$ and $g \in K$, where $e = \{u,v\}$. Clearly, the derived graph $ X \times_{{\xi}} K$ is a covering of $X$
with the first coordinate projection $ p :  X \times_{{\xi}} K
\rightarrow  X$, which is called the natural projection. By defining
$(u, g')^{g} = (u, g'g)$ for any $g \in K$ and $(u, g') \in
V(X \times_ {\xi} K)$, $K$ becomes a subgroup of $ {\rm Aut(X \times _{\xi} K)}$ which acts semiregularly on $V( X
\times_{{\xi}} K)$. Therefore, $ X \times_{{\xi}} K$ can be viewed
as a $K$-covering. For each $u \in V(X)$ and $\{u,v\} \in E(X)$, the vertex
set $\{(u, g) | g\in K\}$ is the fibre of $u$ and the edge set $\{(u,
g) (v,\xi(a)g) | g\in K\}$ is the fibre of $\{u,v\}$, where $a = (u, v)$. The group $K$ of automorphisms of $X$ fixing every fibre setwise
is called the covering transformation group.
Conversely, each regular covering $\widetilde{X}$ of $X$ with a
covering transformation group $K$ can be derived from a $K$-voltage
assignment.  Given a spanning tree $T$ of the graph $X$, a voltage assignment
$\xi$ is said to be $T$-reduced if the voltages on the tree arcs are
the identity. Gross and Tucker in \cite{Gross} showed that every regular
covering $\widetilde{X}$ of a graph $X$ can be derived from a $T$-reduced voltage assignment $\widetilde{X}$ with respect to an
arbitrary fixed spanning tree $T$ of $X$.

 Let $\widetilde{X}$ be a $K$-covering of $X$ with a projection $p$.
If $\alpha \in \rm {Aut(X)}$ and
$\widetilde{\alpha}\in \rm{Aut(\widetilde{X})}$ satisfy
$\widetilde{\alpha}p =p\alpha$, we call $\widetilde{\alpha}$ a
lift of $\alpha$, and $\alpha$ the projection of
$\widetilde{\alpha}$.  The lifts and projections of such subgroups are of
course subgroups in $\rm{Aut(\widetilde{X})}$ and $\rm {Aut(X)}$, respectively.

Let $G$ be a group, and let $S \subseteq G$ be a set of group elements such that the identity element $1$ not in $S$. The Cayley graph associated with $(G,S)$ is  defined as the graph having one vertex associated with each group element,  edges $(g,h)$ whenever $hg^{-1}$ in $S$. The Cayley graph $X$ is denoted by $\rm {Cay(G, S)}$.
In graph theory, the lexicographic product or (graph) composition $G[H]$ of graphs $G$ and $H$ is a graph such that the vertex set of $G[H]$ is the cartesian product $V(G)\times V(H)$; and
any two vertices $(x,y)$ and $(v,w)$ are adjacent in $G[H]$ if and only if either $x$ is adjacent with $v$ in $G$ or $v = x$ and $w$ is adjacent with $y$ in $H$. Clearly, if $G$ and $H$ are arc-transitive then $G[H]$ is arc-transitive.

Let $X$ be a tetravalent $G$-half-arc-transitive graph for some $G\leq  \rm {Aut (X)}$.
Then no element of $G$ can interchange a pair of adjacent vertices in $X$. By \cite{holt},  there is no half-arc-transitive graph with less then $27$ vertices. Half-arc-transitive graphs have  even valencies.   An even length cycle $C$ in $X$ is a
$G$-alternating cycle if every other vertex of $C$ is the head and every other vertex of
$C$ is the tail of its two incident edges in $D_{G}(X)$.  All $G$-alternating cycles in $X$ have the same length. The radius of graph is  half of the length of an alternating cycle.  Any two adjacent $G$-alternating cycles in $X$ intersect in the same number
of vertices, called the $G$-attachment number of $X$. The intersection of two adjacent
G-alternating cycles is called a $G$-attachment set.
 We say that $X$ is tightly attached if the attachment number of $X$ equal  with its radius. 
 
Now we introduce  graph $X(r;m,n)$ and a result due to Maru\v{s}i\v{c}.

Suppose that $m \geq 3$ be an integer, $n \geq 3$ an odd integer and let $r \in \mathbb Z^{*}_{ n}$ satisfy $r^{m} = \pm 1$. The
graph $X(r;m,n)$ is defined to have vertex set $V = \{u_{i} ^{j} | i \in \mathbb Z_{m},j \in \mathbb Z_{n}\}$ and edge set
$E = \{\{u_{i} ^{j} ,u_{i+1} ^{j\pm r^{i}}\} | i \in \mathbb Z_{m},j \in \mathbb Z_{n}\}$.

\begin{proposition}\label{1}
\emph{[\cite{mar}, Theorem 3.4]} A connected tetravalent graph $X$ is a tightly attached half-arc-transitive graph of odd radius $n$ if and only if $X \cong X(r;m,n)$, where $m \geq 3$, and $r \in \mathbb Z^{*}_{ n}$ satisfying $r^{m} = \pm 1$, and moreover none of the following conditions is fulfilled:\\
(1) $r^{2} = \pm 1$;\\
(2) $(r;m,n) = (2;3,7)$;\\
(3) $(r;m,n) = (r;6,7k)$, where $k \geq 1$ is odd, $(7,k) = 1$, $r^{6} = 1$, and there exists a
unique solution $q \in \{r,-r,r^{-1},-r^{-1}\}$ of the equation $x^{2} +x -2 = 0$ such that
$7(q - 1) = 0$ and $q \equiv 5 ~(\rm{mod} ~7)$.
\end{proposition}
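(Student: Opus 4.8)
The plan is to establish both implications of the characterization, with the structural ``only if'' direction built directly from the alternating-cycle machinery and the ``if'' direction reduced to deciding when $X(r;m,n)$ fails to be half-arc-transitive. Throughout I take $G = \mathrm{Aut}(X)$ and work with the induced orientation $D_G(X)$, in which every vertex has out-valency and in-valency $2$.

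First I would exploit tight attachment to produce coordinates. Since all $G$-alternating cycles share the common length $2n$ and the attachment number equals the radius $n$, the attachment sets should form a $G$-invariant block system whose blocks have size $n$; consecutive alternating cycles then overlap in a full block, and the quotient of $X$ by this system is a cycle $C_m$ with $m \ge 3$. Labelling block $i$ as $\{u_i^{\,j} : j \in \mathbb{Z}_n\}$ for $i \in \mathbb{Z}_m$, and using the semiregular cyclic automorphism that advances along an alternating cycle, I would show that each $u_i^{\,j}$ is joined to exactly $u_{i+1}^{\,j+s_i}$ and $u_{i+1}^{\,j-s_i}$ for some step $s_i \in \mathbb{Z}_n^{*}$. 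Edge-transitivity of $G$ forces the steps to be geometric, $s_i = r^i$ after rescaling, and wrapping around all $m$ blocks yields the closure condition $r^m = \pm 1$. This identifies the edge set with $\{\{u_i^{\,j}, u_{i+1}^{\,j \pm r^i}\}\}$, i.e. $X \cong X(r;m,n)$.

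For the converse I would first check that each $X(r;m,n)$ with $m \ge 3$, $n \ge 3$ odd and $r^m = \pm 1$ is a connected tetravalent graph carrying the obvious vertex- and edge-transitive group generated by $\rho : u_i^{\,j} \mapsto u_i^{\,j+1}$ and $\sigma : u_i^{\,j} \mapsto u_{i+1}^{\,rj}$, and that it is tightly attached of radius $n$; hence it is at least half-arc-transitive relative to this subgroup. The real content is to decide when $\mathrm{Aut}(X(r;m,n))$ contains an orientation-reversing automorphism, which would upgrade the graph to arc-transitive and must therefore be excluded. I would look for such an involution of the form $u_i^{\,j} \mapsto u_{-i}^{\,c\,j}$ (possibly twisted), determine the arithmetic constraints on $r$ under which it is a graph automorphism, and match the outcomes to the three listed conditions: $r^2 = \pm 1$ produces an immediate arc-reversal, while the sporadic families $(2;3,7)$ and $(r;6,7k)$ arise from an extra symmetry supported on the $\mathbb{Z}_7$-part, governed exactly by the solvability of $x^2 + x - 2 = 0$ with a root $q \equiv 5 \pmod 7$ satisfying $7(q-1) = 0$.

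The principal obstacle is this last step: computing $\mathrm{Aut}(X(r;m,n))$ precisely enough to certify that, outside the three excluded conditions, no automorphism reverses the orientation. This demands showing that the automorphism group is no larger than $\langle \rho, \sigma \rangle$ together with its natural normalizing symmetries, and that the sporadic coincidences over $\mathbb{Z}_7$ are the only small-order exceptions. Verifying the exact number-theoretic criterion in condition (3) — and confirming that no further accidental symmetries occur for other parameters — is where the delicate casework lies.
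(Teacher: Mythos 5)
The first thing to note is that the paper does not prove this statement at all: Proposition~\ref{1} is imported verbatim from Maru\v{s}i\v{c} [\cite{mar}, Theorem 3.4] as a preliminary, so there is no in-paper argument to compare yours against. Judged on its own terms, your outline correctly identifies the two halves of the known strategy --- coordinatizing a tightly attached graph via its attachment sets so as to land on $X(r;m,n)$ with $r^m=\pm 1$, and then deciding for which parameters $X(r;m,n)$ acquires an orientation-reversing automorphism --- but it does not constitute a proof. The entire content of the theorem is concentrated in the step you yourself flag as ``the principal obstacle'': pinning down $\mathrm{Aut}(X(r;m,n))$ precisely enough to show that arc-transitivity occurs exactly under conditions (1)--(3). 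Saying that the sporadic families ``arise from an extra symmetry supported on the $\mathbb{Z}_7$-part, governed exactly by the solvability of $x^2+x-2=0$'' restates the conclusion rather than deriving it; without the casework (which in \cite{mar} forms the bulk of the argument and rests on analyzing how a hypothetical arc-reversing automorphism must permute the alternating cycles and attachment sets), conditions (2) and (3) cannot be recovered, and one cannot certify that no further exceptional parameters exist.

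Two smaller points. First, the group $\langle\rho,\sigma\rangle$ with $\rho\colon u_i^{\,j}\mapsto u_i^{\,j+1}$ and $\sigma\colon u_i^{\,j}\mapsto u_{i+1}^{\,rj}$ is not edge-transitive: both generators preserve the splitting of the edges at a vertex into the ``$+r^i$'' and ``$-r^i$'' types, so you also need the reflection $u_i^{\,j}\mapsto u_i^{\,-j}$ to obtain the half-arc-transitive subgroup; as written, your verification of the ``if'' direction starts from a group that is only vertex-transitive. Second, in the ``only if'' direction the assertion that edge-transitivity ``forces the steps to be geometric, $s_i=r^i$ after rescaling'' is exactly the place where the semiregular rotation of one alternating cycle has to be transported across a shared attachment set and its induced action compared on consecutive blocks; this needs to be carried out, not asserted, since it is also where the hypothesis that the radius $n$ is odd enters.
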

The following is the main result of  the paper tetravalent half-transitive graphs of order $4p$.
\begin{proposition}\label{2}
\emph{[\cite{wang}, Theorem 3.3]}  Let $p$ be a prime and $X$ a tetravalent graph of order $4p$. Then, $X$ is half-transitive
if and only if $ p \equiv 1~ (\rm{mod} ~8)$ and $X \cong X(r;4, p)$(denote by $X(4, p)$ the graph $X(r; 4, p))$.
\end{proposition}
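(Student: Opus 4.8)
The plan is to prove both directions, with the forward (necessity) direction carrying essentially all of the difficulty. For sufficiency, suppose $p \equiv 1 \pmod 8$. Since $\mathbb{Z}_p^{*}$ is cyclic of order $p-1$ and $8 \mid p-1$, it contains an element $r$ of order $8$, so that $r^4 \equiv -1$ and $r^2 \not\equiv \pm 1 \pmod p$. I would then check that $(r;4,p)$ violates none of the exceptional conditions (1)--(3) of Proposition \ref{1}: condition (1) fails precisely because $r^2 \not\equiv \pm 1$, while (2) and (3) require $m \in \{3,6\}$ and are vacuous for $m=4$. Hence $X(r;4,p)$ is a tightly attached tetravalent half-arc-transitive graph of odd radius $p$, and it has order $mn = 4p$, which settles this direction.

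For necessity, let $X$ be a tetravalent half-transitive graph of order $4p$ and put $A = \mathrm{Aut}(X)$. First I would record that $p \geq 7$, since by \cite{holt} no half-arc-transitive graph has fewer than $27$ vertices. The key preliminary is that the vertex stabiliser $A_v$ is a $2$-group: every element of $A$ preserves the orientation $D_A(X)$, so $A_v$ permutes the two out-neighbours and the two in-neighbours of $v$, and this forces $|A_v|$ to be a power of $2$ (a well-known property of tetravalent half-arc-transitive graphs). Thus $|A| = 2^{s}p$ for some $s$, and because $|A|$ is divisible only by the two primes $2$ and $p$, Burnside's $p^{a}q^{b}$-theorem shows that $A$ is solvable. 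Next I would show that a Sylow $p$-subgroup $P$ (of order $p$) is semiregular with exactly four orbits: a nontrivial element of $P$ fixing a vertex would, since $p \geq 7 > 4$, fix each of its four neighbours and hence, by connectivity, fix all of $X$; so $P$ is fixed-point-free and its orbits partition the $4p$ vertices into four blocks of size $p$.

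The crux is to upgrade this to $P \trianglelefteq A$. Using solvability I would take a minimal normal subgroup $N$ of $A$, which is elementary abelian, a $q$-group with $q \in \{2,p\}$. If $q = p$ then $|N| = p$ forces $N = P$ and we are done; the hard case is when every minimal normal subgroup is a $2$-group. I would try to eliminate this by passing to the quotient $X/N$ of order $4p/|N| \in \{2p,p\}$ (after checking $N$ acts semiregularly, so $|N| \in \{2,4\}$). This step is delicate because the quotient need not inherit half-transitivity, so the contradiction cannot come merely from the non-existence of half-arc-transitive graphs of order $p$ or $2p$ (\cite{chao,ox}); instead one must control the valency of the quotient and how the $2$-local structure interacts with the four $P$-orbits, ultimately showing that such an $N$ would force $X$ itself to be arc-transitive. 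Establishing the normality of $P$ is where I expect the real work to lie.

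Finally, with $P \cong \mathbb{Z}_p$ normal, $X$ becomes a regular $\mathbb{Z}_p$-cover of the tetravalent quotient multigraph $\bar X = X/P$ on the four $P$-orbits. I would reconstruct $X$ from a $T$-reduced voltage assignment on $\bar X$ (Gross--Tucker, \cite{Gross}) and, imposing that the induced action of $A/P$ lift to a half-arc-transitive action, identify the voltages with the defining relations of $X(r;m,n)$; since the odd part of $4p$ is $p$, the only admissible parameters are $m=4$, $n=p$, whence $X \cong X(r;4,p)$, tightly attached of odd radius $p$. Proposition \ref{1} then gives $r^4 \equiv \pm 1$ with $r^2 \not\equiv \pm 1$; the case $r^4 \equiv 1$ is impossible, as it would force $r^2 \equiv -1$, so $r^4 \equiv -1$, meaning $r$ has order $8$ and $8 \mid p-1$, i.e. $p \equiv 1 \pmod 8$, as required.
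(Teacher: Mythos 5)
The paper offers no proof of this proposition: it is imported verbatim from [\cite{wang}, Theorem~3.3], so there is nothing internal to compare your argument against. Judged on its own terms, your sufficiency direction is correct and complete: an element $r$ of order $8$ in $\mathbb{Z}_p^{*}$ exists exactly when $p\equiv 1 \pmod 8$, it satisfies $r^4=-1$ and $r^2\neq\pm 1$, and the exceptional cases (2) and (3) of Proposition~\ref{1} only concern $m=3$ and $m=6$, so Proposition~\ref{1} applies directly. The closing arithmetic of the necessity direction is also fine: in the cyclic group $\mathbb{Z}_p^{*}$ the relation $r^4=1$ forces $r^2=\pm 1$, so condition (1) of Proposition~\ref{1} leaves only $r^4=-1$, i.e.\ $\mathrm{ord}(r)=8$ and $p\equiv 1\pmod 8$.

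The necessity direction, however, contains a genuine gap that you yourself flag: you never actually prove $P\trianglelefteq A$, you only reduce it to "eliminate the case where every minimal normal subgroup is a $2$-group" and then describe that elimination as delicate without carrying it out. That case analysis is the substance of the theorem; as written, the proof does not exist. A second, related gap is the final step. To invoke Proposition~\ref{1} and conclude $X\cong X(r;4,p)$ you must know that $X$ is \emph{tightly attached of odd radius $p$}, i.e.\ that the $A$-alternating cycles have length $2p$ and that adjacent ones meet in $p$ vertices. Knowing that $X$ is a $\mathbb{Z}_p$-regular cover of the quotient $X/P\cong C_4$ does not by itself give this; the Gross--Tucker voltage reconstruction produces \emph{some} $\mathbb{Z}_p$-cover of $C_4$ with doubled fibres, and identifying it with $X(r;4,p)$ requires exactly the alternating-cycle analysis you skip. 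A cleaner route to normality of $P$, consistent with the tools this paper collects, is via Proposition~\ref{3}: once the attachment number is shown to be at least $3$ one gets $|A_v|=2$, hence $|A|=8p$, and Sylow counting ($n_p\mid 8$, $n_p\equiv 1\pmod p$) forces $n_p=1$ for $p>7$; the loosely and antipodally attached cases must then be excluded by Maru\v{s}i\v{c}'s structural results, and the small prime $p=7$ handled separately. None of this is present in your sketch, so the proof is incomplete at its central step.
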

Now we express an observations about  tetravalent half-arc-transitive graphs.

\begin{proposition}\label{3}
\emph{[\cite{maru}, Lemma 3.5]} Let $X$ be a connected tetravalent $G$-half-arc-transitive graph for some $G \leq \rm {Aut(X)}$, and let $\Delta$ be a $G$-attachment set of $X$. If $|\Delta| \geq 3$, then the vertex-stabilizer of $v \in V (X)$ in $G$ is of order $2$.
\end{proposition}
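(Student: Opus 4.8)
The plan is to prove the two bounds $|G_v|\ge 2$ and $|G_v|\le 2$ separately, the hypothesis $|\Delta|\ge 3$ being needed only for the second. For the lower bound I would use that $G$ is transitive on the arc-orbit $A_1$ and that, since $X$ is tetravalent and $G$-half-arc-transitive, every vertex has in-degree $2$ and out-degree $2$ in $D_G(X)$. The two $A_1$-arcs leaving $v$ then lie in a single $G_v$-orbit, so some element of $G_v$ interchanges the two out-neighbours of $v$; in particular $G_v\ne 1$ and $|G_v|\ge 2$.

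For the upper bound I would first record the local geometry. Each vertex lies on exactly two $G$-alternating cycles: one cycle $C_1$ in which it is a head (absorbing its two in-edges) and one cycle $C_2$ in which it is a tail (absorbing its two out-edges); moreover the $G$-attachment set through $v$ is exactly $\Delta=C_1\cap C_2$. Since every element of $G$ preserves the orbit $A_1$, hence the orientation $D_G(X)$, any $g\in G_v$ sends in-edges at $v$ to in-edges at $v$. As the two in-edges at $v$ are precisely the two edges of $C_1$ meeting $v$, $g$ stabilises $C_1$ setwise, and likewise $C_2$. This yields a homomorphism $\rho=(\rho_1,\rho_2)\colon G_v\to \mathrm{Aut}(C_1)\times\mathrm{Aut}(C_2)$. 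Because an orientation-preserving automorphism of an even cycle that fixes a vertex is either the identity or the reflection through that vertex (and the reflection through a head, resp.\ tail, vertex does preserve the alternating orientation), each $\rho_i$ takes values in a group of order $2$, so $\mathrm{Im}\,\rho\le \mathbb Z_2\times\mathbb Z_2$.

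Now $|\Delta|\ge 3$ enters to force $|\mathrm{Im}\,\rho|\le 2$. The reflection of $C_1$ through $v$ fixes at most two vertices of $C_1$, so with $|\Delta|\ge 3$ there is some $v'\in\Delta$ moved by it. If some $g\in G_v$ induced the reflection on $C_1$ but the identity on $C_2$, then $g(v')$ would be forced to differ from $v'$ (reading it in $C_1$) and to equal $v'$ (reading it in $C_2\supseteq\Delta$), a contradiction; symmetrically the pair (identity on $C_1$, reflection on $C_2$) is excluded. Hence $\mathrm{Im}\,\rho\subseteq\{(1,1),(\text{reflection},\text{reflection})\}$ has order at most $2$.

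It remains to show $\rho$ is faithful, and this is the step I expect to be the main obstacle, since fixing $C_1$ and $C_2$ pointwise does not by itself pin down $g$ on the neighbours lying off these cycles. I would argue by propagation along adjacent alternating cycles. Suppose $g\in\ker\rho$, so $g$ fixes $C_1$ (say) pointwise; let $C'$ be any alternating cycle adjacent to $C_1$, with attachment set $C_1\cap C'$, which has size $|\Delta|\ge 3$ because the attachment number is constant. Fixing a vertex of $C_1\cap C'$ and preserving the orientation, $g$ stabilises $C'$ setwise (it is one of the two alternating cycles through that vertex), and it fixes the $\ge 3$ vertices of $C_1\cap C'$; as a nontrivial cycle automorphism fixes at most two vertices, $g$ must fix $C'$ pointwise. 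Since $X$ is connected, every edge lies on an alternating cycle and the ``adjacency graph'' on alternating cycles is connected, so this propagation forces $g$ to fix all of $V(X)$, i.e.\ $g=1$ and $\ker\rho=1$. Combining the three steps gives $|G_v|=|\mathrm{Im}\,\rho|=2$.
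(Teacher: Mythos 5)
The paper does not actually prove this statement: it is imported verbatim from Maru\v{s}i\v{c} and Praeger [\cite{maru}, Lemma 3.5], so there is no in-paper proof to compare against. Your argument is a correct, essentially self-contained reconstruction of the standard proof and follows the same route as the original source: the lower bound from transitivity of $G_v$ on the two out-arcs at $v$, the upper bound from the action of $G_v$ on the two alternating cycles through $v$, the use of $|\Delta|\ge 3$ to exclude the two ``mixed'' reflection types (a reflection of an even cycle through $v$ fixes only $v$ and its antipode, so it must move some third vertex of $\Delta$), and faithfulness by propagating pointwise fixation across adjacent alternating cycles via connectivity. Two inputs you use implicitly deserve flagging, though both belong to the background theory the paper quotes in its preliminaries: (i) every vertex lies on exactly two \emph{distinct} $G$-alternating cycles, i.e.\ alternating cycles are simple cycles, so the cycle carrying the in-edges at $v$ cannot coincide with the one carrying the out-edges --- this is what makes both your map $\rho=(\rho_1,\rho_2)$ and the step ``$g$ stabilises $C'$ because $C$ and $C'$ are the only alternating cycles through $w$'' legitimate; and (ii) all attachment sets have the same cardinality, which is what lets you transfer the hypothesis $|\Delta|\ge 3$ from the given attachment set to every intersection $C\cap C'$ met during the propagation. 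Granting these standard facts, each step checks out, and the final count $|G_v|=|\ker\rho|\cdot|\mathrm{Im}\,\rho|=2$ is sound.
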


\begin{proposition}\label{4}
\emph{\cite{gor}} A non-abelian simple group whose order has at
most three prime divisors is isomorphic to one of the following groups:
\begin{center}
$\rm {A_{5}, A_{6},PSL(2,7),PSL(2,8),PSL(2,17),PSL(3,3),PSU(3,3),PSU(4,2)}$,
 \end{center}
 whose orders are $2^{2}~.~ 3 ~. ~5$, $ 2^{3} ~.~ 3^{2} ~.~ 5$, $ 2^{3} ~.~ 3 ~.~ 7$, $ 2^{3} ~.~ 3^{2} ~.~ 7$, $2^{4} ~.~ 3^{2} ~.~ 17$, $ 2^{4} ~.~ 3^{3} ~.~ 13$, $
2^{5}~.~3^{3}~.~7$, $ 2^{6} ~.~3^{4}~.~5$, respectively.
\end{proposition}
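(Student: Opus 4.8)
This is the classical classification of the non-abelian simple $K_3$-groups, those whose order is divisible by exactly three distinct primes. The plan is to force the three prime divisors by an elementary solvability argument, invoke the classification of finite simple groups (CFSG), and then carry out a finite case analysis across the families. For the first step, Burnside's $p^aq^b$ theorem says that every finite group whose order has at most two prime divisors is solvable, so a non-abelian simple group has order divisible by at least three primes; together with the hypothesis this yields exactly three, say $|G|=p^aq^br^c$ with $p<q<r$ distinct.

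By CFSG, every non-abelian finite simple group is an alternating group $A_n$ with $n\geq 5$, a simple group of Lie type, or one of the $26$ sporadic groups. The alternating and sporadic families are immediate. Since $|A_n|=n!/2$ and $n\geq 7$ already forces $2,3,5,7\mid n!$, only $A_5$ and $A_6$ survive, of orders $2^2\cdot3\cdot5$ and $2^3\cdot3^2\cdot5$. Every sporadic group has order divisible by at least four distinct primes --- already the smallest, $\mathrm{M}_{11}$, has order $2^4\cdot3^2\cdot5\cdot11$ --- so none qualifies.

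The substance of the argument is the Lie type groups. I would treat the families $\mathrm{PSL}(n,q)$, $\mathrm{PSU}(n,q)$, $\mathrm{PSp}(2n,q)$, the orthogonal types, and the exceptional and twisted types one by one, imposing that the order, evaluated at a prime power $q=\ell^f$, be a $K_3$-number. Each such order is $\ell^N$ (supplying the characteristic prime) times a product of factors of shape $q^i\pm1$, and demanding that all these factors together introduce only two further primes is very restrictive. I would bound both the Lie rank and the exponents $i$ by a primitive-prime-divisor count: by Zsigmondy's theorem all but finitely many $q^i-1$ contribute a new prime, so large rank or large $i$ forces too many primes. This leaves finitely many small-rank families --- essentially $\mathrm{PSL}(2,q)$, $\mathrm{PSL}(3,q)$, $\mathrm{PSU}(3,q)$ and $\mathrm{PSp}(4,q)$ --- to be finished by elementary factorisation; for $\mathrm{PSL}(2,q)$, for instance, one asks when $q(q-1)(q+1)/\gcd(2,q-1)$ has exactly three prime divisors, which pins down $q\in\{7,8,17\}$.

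I expect these low-rank cases to be the main obstacle. Deciding precisely when, say, $q-1$ and $q+1$ jointly carry only two odd primes amounts to exponential Diophantine conditions of Catalan--Pillai type, and excluding every larger $q$ needs Zsigmondy's theorem together with careful ad hoc elimination. This would confirm that the surviving Lie type groups are exactly $\mathrm{PSL}(2,7)$, $\mathrm{PSL}(2,8)$, $\mathrm{PSL}(2,17)$, $\mathrm{PSL}(3,3)$, $\mathrm{PSU}(3,3)$ and $\mathrm{PSU}(4,2)$, which together with $A_5$ and $A_6$ give precisely the eight groups listed, with no further solution.
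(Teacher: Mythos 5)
The paper does not prove this statement at all: it is quoted verbatim as a known theorem from Gorenstein's book (the classification of the so-called $K_3$-groups, originally due to Herzog), so there is no internal proof to measure your attempt against. Judged on its own terms, your outline follows the standard modern route --- Burnside's $p^aq^b$ theorem to force exactly three primes, CFSG to enumerate the candidates, Zsigmondy's theorem on primitive prime divisors to kill large rank and large field exponents --- and every concrete assertion you make (only $A_5$, $A_6$ among alternating groups; no sporadic group is a $K_3$-group; the surviving low-rank families) is correct. It is worth noting that the historical proof predates CFSG and proceeds quite differently, via character theory and the then-available classifications of specific classes of simple groups; your CFSG-plus-Zsigmondy argument is cleaner but logically much heavier machinery.

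The genuine gap is the one you flag yourself: the low-rank Diophantine elimination is named but not executed, and that is where essentially all of the content lies. For $\mathrm{PSL}(2,q)$ alone you must determine every prime power $q$ for which $q(q^2-1)/\gcd(2,q-1)$ has exactly three prime divisors; for $q=p$ prime this forces $p-1$ and $p+1$ (which share only the factor $2$) into shapes $2^c$ and $2r^d$ or $2r^d$ and $2^e$, and ruling out all but $p\in\{5,7,17\}$ requires Mersenne/Fermat-type and Catalan-type arguments, not merely Zsigmondy. Similar unfinished work remains for $\mathrm{PSL}(3,q)$, $\mathrm{PSU}(3,q)$, $\mathrm{PSp}(4,q)$ and the small exceptional groups (e.g.\ one must explicitly exclude $G_2(q)$, $^2B_2(q)$, $^3D_4(q)$ for small $q$). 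As written, your text is a correct and well-organised plan for a proof rather than a proof; since the paper itself treats the statement as a citation, the honest fix is either to carry out that case analysis in full or to do what the authors do and cite the classification of $K_3$-groups directly.
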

The following result is  extracted from [\emph{\cite{Ber}}, Theorem 1].
\begin{proposition}\label{5}
 Let $X$ be a tetravalent arc-transitive graph of order $2pq$ where $p$ and $q$ are odd and distinct primes. Then one of the
following holds:\\
(1) $X$ is arc-regular and appears in \emph{\cite{zh}};\\
(2) $X$ is isomorphic to the lexicographic product $C_{pq}[2K_{1}]$ of the cycle $C_{pq}$ and the edgeless graph on two vertices $2K_{1}$.
\end{proposition}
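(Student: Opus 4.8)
The plan is to analyse $A:=\mathrm{Aut}(X)$ through its action on $V(X)$ and its normal-subgroup structure, splitting the argument according to whether $A$ is solvable. The governing arithmetic is the following: since $X$ is connected, tetravalent and $A$-arc-transitive, the local group $A_v^{N(v)}$ is a transitive subgroup of $S_4$, and the structure theory of the vertex--edge amalgam of a $4$-valent arc-transitive graph shows that $A_v$ is a $\{2,3\}$-group, hence solvable. Consequently, as $|V(X)|=2pq$, the order $|A|=2pq\,|A_v|$ involves only primes from $\{2,3,p,q\}$, and $X$ is arc-regular precisely when $|A_v|=4$.

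If $A$ is non-solvable, then the non-solvability must originate in the action on $V(X)$: let $T$ be a non-abelian composition factor of $A$. Every non-abelian simple group has order divisible by at least three distinct primes, so $|T|$ is a $\{2,3,p,q\}$-number with at least three prime divisors, whence $T$ is one of the eight groups listed in Proposition~\ref{4} (or, if all four primes occur, a tightly constrained variant). Using that $p$ and $q$ divide $|V(X)|$ only to the first power while $A_v$ contributes only $2$'s and $3$'s, I would match $|T|$ against $|A|$ to cut the list down to a handful of candidates and realise (or exclude) each by inspecting the associated arc-transitive graph. Since $\mathrm{Aut}(C_{pq}[2K_{1}])\cong \mathbb{Z}_2\wr \mathrm{Aut}(C_{pq})$ is solvable, none of these candidates can be the blow-up, so each survivor is arc-regular and is recorded in \cite{zh}, yielding conclusion~(1).

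If $A$ is solvable, I would run a normal-quotient reduction. Choosing a maximal normal subgroup $N\cong \mathbb{Z}_r^{k}$ (with $r\in\{2,3,p,q\}$) having at least three orbits on $V(X)$, the normal-quotient theorem for arc-transitive graphs makes $N$ semiregular and exhibits $X$ as a normal $N$-cover of a tetravalent $(A/N)$-arc-transitive quotient $X/N$ whose order is a proper divisor of $2pq$; induction on $|V|$, with base cases the known tetravalent arc-transitive circulants on $p$, $q$, $2p$, $2q$ and $pq$ vertices, then disposes of this case. In the residual situation every nontrivial normal subgroup has at most two orbits, and here I would examine the twin relation: if two vertices share their entire neighbourhood, the twin classes have size $2$, the quotient by twins is a connected $2$-valent arc-transitive graph, i.e. a cycle $C_{pq}$, and reconstructing the fibres forces $X\cong C_{pq}[2K_{1}]$, conclusion~(2); if no such twins exist, the stabiliser is driven down to order $4$ and $X$ is arc-regular, conclusion~(1).

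I expect the main obstacle to be this solvable, ``few-orbit'' endgame: proving that once no proper normal subgroup has three or more orbits, an arc-transitive graph of order $2pq$ that is not arc-regular must be exactly the lexicographic blow-up $C_{pq}[2K_{1}]$. This amounts to controlling the local action $A_v^{N(v)}$ together with the two-block system of size $pq$ tightly enough to exclude any sporadic non-arc-regular example, and it is in this step that the genuine case-by-case work resides.
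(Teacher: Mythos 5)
A preliminary remark: the paper contains no proof of Proposition~\ref{5} --- it is imported verbatim from Theorem~1 of \cite{Ber} --- so there is no internal argument to compare yours against; the benchmark is the proof in that reference, which does follow the broad architecture you describe (bounding $|A_v|$ as a $\{2,3\}$-group, splitting on solvability of $A$, using the classification of simple groups with few prime divisors, and a normal-quotient reduction in the solvable case). At that level of resolution your plan is sound, and several of your individual observations (solvability of $\mathrm{Aut}(C_{pq}[2K_1])$, Burnside forcing at least three prime divisors for a non-abelian composition factor, semiregularity of a normal subgroup with at least three orbits) are correct.

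However, what you have written is a strategy, not a proof, and both places where you defer the work are exactly where the theorem's content lies. First, in the non-solvable case Proposition~\ref{4} only lists the simple groups with \emph{at most three} prime divisors; since $|A|=2pq\,|A_v|$ can be divisible by all four of $2,3,p,q$, a composition factor such as $\mathrm{PSL}(2,r)$ with four prime divisors is not excluded by that list, and handling these is not a routine matching of orders --- one must determine which such groups admit a faithful transitive action on $2pq$ points with a self-paired suborbit of length $4$. Second, and more seriously, your solvable ``few-orbit endgame'' is resolved by an ad hoc twin argument, but twins only detect the blow-up after you already know the stabiliser is large for that reason; the correct tool is the Gardiner--Praeger analysis of tetravalent $G$-arc-transitive graphs possessing a normal subgroup whose quotient has valency less than $4$, which identifies all such graphs as the graphs $C(2;r,s)$ of \cite{prag} on $r\cdot 2^{s}$ vertices. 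For order $2pq$ with $p,q$ odd this forces $s=1$ and $r=pq$, i.e.\ $C_{pq}[2K_1]$, and it is this classification that drives $|A_v|$ down to $4$ (hence arc-regularity, hence membership in the census of \cite{zh}) in every remaining case. Your closing paragraph concedes that this step is open, so as it stands the proposal establishes neither the arc-regular conclusion~(1) nor the dichotomy with~(2).
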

In the following, we describe the structure of the graphs required in this paper [\cite{pot}, \cite{prag}, \cite{zhoo}].

The Rose Window graph $R_{6}(5,4)$   is a tetravalent graph with   $12$ vertices. Its vertex set is $\{ S_{i}, Q_{i} | i \in Z_{6} \}$. The graph has four kinds of edges: kind of edges: $S_{i}S_{i+1}$ (rim edges), $S_{i}Q_{i}$ (inspoke edges), $S_{i+5}Q_{i}$ (outspoke edges) and
$Q_{i}Q_{i+4}$ (hub edges). $|\rm{Aut(R_{6}(5,4))}|=48$. Fig 1 shows $R_{6}(5,4)$.

  A general Wreath graph  $W(6, 2)$ has $12$ vertices and it is  regular of valency 4. Its vertex set is $\{ E_{i}, F_{i} | i \in Z_{6} \}$, where $E_{i}= (i,0)$ and $F_{i}= (i,1)$. Its edges are $\{E_{i}, E_{i+1} \}$, $\{E_{i}, F_{i+1} \}$, $\{F_{i}, E_{i+1} \}$ and $\{F_{i}, F_{i+1} \}$. $|\rm{Aut(W(6, 2))}|=768$. See Fig 2.
 \begin{center}
   \includegraphics[width=0.3\textwidth]{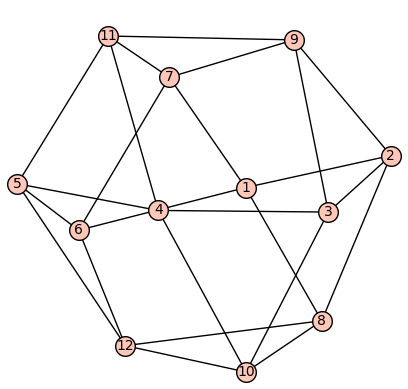}\hspace{1.5cm}\includegraphics[width=0.3\textwidth]{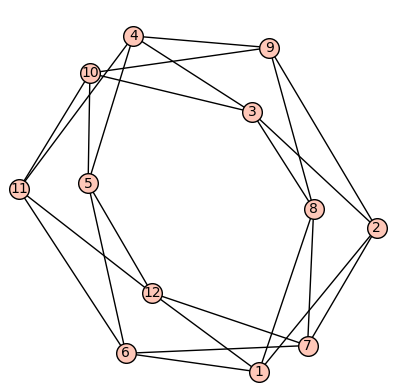}\\
   Fig 1. The Rose Window graph $R_{6}(5,4)$ \hspace{1.5cm} Fig 2. The Wreath graph  $W(6, 2)$
  \end{center}

\vspace{1cm}The  graph $C(2;p,2)$ was first defined by Praeger and Xu [\cite{prag},  Definition 2.1 (b)].
Let $p$ be an odd prime. The graph $C(2;p,2)$ has vertex set $\mathbb Z_{p} \times (\mathbb Z_{2} \times \mathbb Z_{2})$ and its edges are defined by
$\{(i,(x,y)),(i+1,(y,z))\} \in  E(C(2;p,2))$ for all $i \in  \mathbb Z_{p}$ and $x, y, z \in \mathbb Z_{2}$. $ \rm {Aut(C(2;p,2))}\cong   D_{2p} \ltimes  \mathbb Z_{2}^{p}$.

  Let   $ p \equiv 1~ (\rm{mod} ~4)$, where $p$ is a prime and $w$ is an element of order 4 in $\mathbb Z^{*}_{p}$. The graph $CA^{0}_{4p}$ is $ \rm{Cay}(G, \{ a, a^{-1}, a^{w^{2}}b, a^{-w^{2}}b \})$ and the graph $CA^{1}_{4p}$ is $ \rm{Cay}(G, \{ a, a^{-1}, a^{w}b, a^{-w}b \})$, where $G= <a>  \times <b> \cong \mathbb Z_{2p} \times \mathbb Z_{2} $.

\section{Main Results}
In this section,  we study all tetravalent half-arc-transitive graphs of order $12p$ where $p$ is a prime. To do this, we prove  the following results.
\begin{lemma}\label{9}
Let $X$ be a  graph, $G\leqslant \rm {Aut(X)}$, $N \trianglelefteq G$ and $X$ be  $N$-regular covering of $X_{N}$. Then  $X$ is  $G$-half-arc-transitive if and only if $X_{N}$ is  $G/N$-half-arc-transitive.
\end{lemma}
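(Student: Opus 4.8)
The plan is to reduce the statement to three parallel transitivity equivalences — on vertices, on edges, and on arcs — and then assemble them. Throughout I would write $p:X\to X_{N}$ for the covering projection, so that the $N$-orbits on $V(X)$ are exactly the fibres $p^{-1}(v)$, $v\in V(X_{N})$. Since $N\trianglelefteq G$ and (the paper's graphs being connected) $N$ is the full covering transformation group, every element of $G$ permutes the fibres, the induced action of $G$ on $X_{N}$ factors through $G/N$, and its kernel equals the set of fibre-preserving automorphisms in $G$, which is $G\cap \mathrm{CT}(p)=N$; hence $G/N\le\mathrm{Aut}(X_{N})$ acts faithfully. The basic compatibility I would record first is that each $g\in G$ is a lift of its image $\bar g\in G/N$, i.e. $p\,g=\bar g\,p$, and that $p$ carries vertices to vertices, edges to edges and arcs to arcs in an $N$-equivariant way.

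The technical core is the observation that $N$ acts transitively on every fibre of vertices, of edges, and of arcs. For vertices and edges this is immediate from the hypothesis that $N$ is semiregular on $V(X)$ and on $E(X)$ with the fibres as its orbits. For arcs I would use the covering property directly: given an arc $(u,v)$ of $X_{N}$ and any $\tilde u\in p^{-1}(u)$, the defining bijection $p|_{N_{X}(\tilde u)}:N_{X}(\tilde u)\to N_{X_{N}}(u)$ produces a unique neighbour $\tilde v$ of $\tilde u$ lying over $v$, so the lifts of $(u,v)$ are in bijection with $p^{-1}(u)$; since $N$ is semiregular on vertices it is semiregular on these lifts, and as there are $|N|$ of them, $N$ is in fact regular, hence transitive, on each arc-fibre.

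With these two ingredients the transfer lemma is routine. Let $\Omega\in\{V,E,A\}$. If $G$ is transitive on $\Omega(X)$, then applying the equivariant surjection $p$ shows that $G/N$ is transitive on $\Omega(X_{N})$. Conversely, suppose $G/N$ is transitive on $\Omega(X_{N})$ and take $\omega_{1},\omega_{2}\in\Omega(X)$. Choosing $\bar g\in G/N$ with $\bar g\,p(\omega_{1})=p(\omega_{2})$ and lifting to $g\in G$, the objects $g(\omega_{1})$ and $\omega_{2}$ satisfy $p(g(\omega_{1}))=p(\omega_{2})$ and so lie in the same fibre; by transitivity of $N$ on that fibre there is $n\in N$ with $n\,g(\omega_{1})=\omega_{2}$, and $ng\in G$ does the job. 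Thus $G$ is transitive on $\Omega(X)$ if and only if $G/N$ is transitive on $\Omega(X_{N})$, for each of $\Omega=V,E,A$.

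Finally I would assemble the three equivalences: $X$ is $G$-half-arc-transitive exactly when $G$ is transitive on $V(X)$ and $E(X)$ but intransitive on $A(X)$, and by the transfer lemma this holds if and only if $G/N$ is transitive on $V(X_{N})$ and $E(X_{N})$ but intransitive on $A(X_{N})$, i.e. if and only if $X_{N}$ is $G/N$-half-arc-transitive. The only point demanding real care is the arc-fibre analysis in the second paragraph — one must invoke the neighbourhood bijection defining a covering rather than merely the semiregularity of $N$ on $V(X)$ — together with the verification that the induced $G/N$-action on $X_{N}$ is well defined and faithful; the vertex and edge cases are then formal consequences of semiregularity.
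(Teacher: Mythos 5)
Your proposal is correct and follows essentially the same route as the paper's proof: both reduce the statement to transferring vertex-, edge- and arc-transitivity separately between $X$ and $X_{N}$, using that $N$ acts transitively on each fibre to descend and lift group elements. The one substantive difference is that where the paper simply asserts the existence of $n''\in N$ carrying $(u^{n},v^{n'})$ back to $(u,v)$, you actually justify this via the neighbourhood bijection defining the covering (regularity of $N$ on arc-fibres), which is a genuine improvement in rigour rather than a different method.
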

\begin{proof}
Suppose that $N \trianglelefteq G$ and $X$  is  $G$-half-arc-transitive. Since $X$ is   $N$-regular covering of $X_{N}$, it follows that $K=N$ and $G/N \leqslant \rm {Aut(X_{N})}$, where $K$ is the kernel of $G$ acting on orbits of $N$.
  Let   $x^{N}, y^{N}$ be two arbitrary vertices  of graph $X_{N}$. By our assumption there exits $g \in G$ such that $x^{g}=y$. Now $(x^{N})^{Ng}=(x^{g})^{N}= y^{N}$. It implies that $X_{N}$ is  $G/N$-vertex-transitive. Now, suppose that $\{x^{N}, y^{N}\}$ and $\{u^{N}, v^{N}\}$ are two arbitrary edges of $X_{N}$. Without loss of generality, we may suppose that $\{x, y\}$ and $\{u, v\}$ are two edges of $X$. By our assumption there exits $g \in G$ such that $\{x, y\}^{g}= \{u, v\}$.  Then we may assume that $x^{g}=u$ and $y^{g}=v$.
   Hence $(x^{N})^{Ng}=x^{Ng}=x^{gN}= u^{N}$ and $(y^{N})^{Ng}=y^{Ng}=y^{gN}= v^{N}$. Then $X_{N}$ is  $G/N$-edge-transitive. Suppose to contrary that $X_{N}$ is  $G/N$-arc-transitive. Let $(x,y)$ and $(u,v)$ are two arcs of graph $X$. Now $(x^{N},y^{N})$ and $(u^{N},v^{N})$  are two arcs of graph $X_{N}$. By our assumption, there exits $Ng \in G/N$ such that $(x^{N},y^{N})^{Ng}=(u^{N},v^{N})$. Therefore $(x^{N})^{Ng}= u^{N}$ and $(y^{N})^{Ng}= v^{N}$. Thus $x^{Ng}= u^{N}$ and $y^{Ng}= v^{N}$. Then $x^{g}= u^{n}$ and $y^{g}= v^{n'}$ for $n, n' \in N $ and so $(x,y)^{g}= (u^{n},v^{n'})$.
   There exits $n'' \in N $ such that $(u^{n},v^{n'})^{n''}=(u,v)$. Then  $(x,y)^{gn''}= (u^{n},v^{n'})^{n''} =(u,v)$. Therefore $X$ is $G$-arc-transitive, a contradiction. Then  $X_{N}$ is  $G/N$-half-arc-transitive.

Now suppose that $X_{N}$ is  $G/N$-half-arc-transitive. Thus $G/N$ acts transitively on $V(X_{N})$. Let $u,v \in V(X)$ and $u^{N}, v^{N} \in  V(X_{N})$. Then there is $Ng \in G/N$ such that $(u^{N})^{Ng}= v^{N}$ and hence, there is $n' \in N$ such that $u^{g}=v^{n'}$ and $u^{g(n')^{-1}}=v$. Then since $g(n')^{-1} \in G$, it implies that  $X$ is vertex-transitive. For any $\{ u,v \},\{x,y \} \in E(X)$, we have $\{ u^{N},v^{N} \},\{ x^{N},y^{N} \} \in E(X_{N})$. Since $X_{N}$ is $G/N$-edge-transitive, we have $Ng \in G/N$ such that $\{ u^{N},v^{N} \}^{Ng}=\{ x^{N},y^{N} \}$ and $\{ (u^{N})^{Ng},(v^{N})^{Ng} \}=\{ x^{N},y^{N} \}$. Without loss of generality, we may suppose that $(u^{N})^{Ng}=(u)^{Ng}= x^{N}$ and $(v^{N})^{Ng}=(v)^{Ng}= y^{N}$. There exits $n',n^{''} \in N$ such that $\{u, v\}^{g}= \{x^{n'},y^{n^{''}}\}$. Also there exits $n \in N$ such that   $\{x^{n'},y^{n^{''}}\}^{n}= \{x,y\}$.  Thus we may assume that  $\{u, v\}^{gn}= \{x,y\}$ and so $X$ is $G$-edge-transitive. Similar to the previous, it can be shown that if  $X_{N}$  is not $G/N$-arc-transitive then $X$ is not $G$-arc-transitive. Therefore $X$ is $G$-half-arc-transitive.
  \end{proof}
  The following lemma is basic for the  main result.
\begin{lemma}\label{10}
Let $X$ be a  half-arc-transitive graph,  $p$ is a prime and $N  \trianglelefteq \rm {Aut(X)}$, where $N \cong \mathbb Z_{p}$. If the quotient graph $X_{N}$ is a Cayley graph and has  the same valency with $X$ then $X$ is a $N$-regular covering of $X_{N}$ and $X$ is a Cayley graph.
\end{lemma}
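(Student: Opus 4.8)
The plan is to prove the two assertions in turn, first establishing that $X$ is an $N$-regular covering of $X_{N}$ and then deducing the Cayley property from that covering structure.

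\emph{Semiregularity of $N$.} Since $X$ is half-arc-transitive, $G:=\mathrm{Aut}(X)$ is transitive on $V(X)$, and because $N\trianglelefteq G$ the images under $G$ of one $N$-orbit are again $N$-orbits; hence all $N$-orbits on $V(X)$ have the same length. As $N\cong\mathbb Z_{p}$ is a nontrivial group of automorphisms it acts faithfully and not trivially, so its orbits cannot all be singletons; since $p$ is prime, every orbit has length $p$ and every point-stabiliser $N_{v}$ is trivial, i.e. $N$ is semiregular on $V(X)$. For an edge $\{u,v\}$, any $n\in N$ fixing it either fixes both $u$ and $v$ or interchanges them; the latter is impossible, since no automorphism of a half-arc-transitive graph can flip an edge, and the former forces $n\in N_{u}=1$. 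Thus $N$ is semiregular on $E(X)$ as well.

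\emph{The covering.} Fix $\widetilde{v}$ in the fibre over $v\in V(X_{N})$. The local projection $p|_{N_{X}(\widetilde{v})}:N_{X}(\widetilde{v})\rightarrow N_{X_{N}}(v)$ is surjective: if the orbit $C$ is a neighbour of the orbit $B=v$ in $X_{N}$, then some vertex of $B$ is adjacent to some vertex of $C$, and as $N$ is transitive on $B$ we may apply an element of $N$ to carry that vertex to $\widetilde{v}$, producing a neighbour of $\widetilde{v}$ inside $C$. Because $X$ and $X_{N}$ have the same valency, this surjection is a map between finite sets of equal cardinality, hence a bijection. Combined with the semiregularity of $N$ on $V(X)$ and $E(X)$ and the identification $X_{N}=X/N$, this shows that $X$ is an $N$-regular covering of $X_{N}$, which is the first assertion. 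By Lemma \ref{9} applied with $G=\mathrm{Aut}(X)$, the graph $X_{N}$ is then $G/N$-half-arc-transitive, so $\Lambda:=G/N$ is transitive on $V(X_{N})$.

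\emph{The Cayley property.} I will use the standard fact that a graph is a Cayley graph exactly when its automorphism group contains a subgroup acting regularly on the vertex set. Since $N\trianglelefteq G$, every $\sigma\in G$ preserves the $N$-orbits and projects to $\bar{\sigma}\in\mathrm{Aut}(X_{N})$, so $\Lambda=G/N$ is a transitive subgroup of $\mathrm{Aut}(X_{N})$ all of whose elements lift to $G$. Assuming we have a regular subgroup $R\le\mathrm{Aut}(X_{N})$ that lifts (this is where the Cayley hypothesis $X_{N}\cong\mathrm{Cay}(H,S)$ enters), let $\widetilde{R}\le G$ be the full preimage of $R$ under $G\to\Lambda$. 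Then $N\le\widetilde{R}$ and $\widetilde{R}/N\cong R$, so $|\widetilde{R}|=p\,|R|=p\,|V(X_{N})|=|V(X)|$; moreover $\widetilde{R}$ is transitive on $V(X)$, since $R$ moves any fibre to any fibre while $N\le\widetilde{R}$ is transitive within each fibre. A transitive group of order equal to the number of vertices is regular, so $\widetilde{R}$ is regular and $X$ is a Cayley graph.

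\emph{Main obstacle.} The only non-formal point is the existence of a regular subgroup of $\mathrm{Aut}(X_{N})$ that lifts along $p$: a transitive group need not contain a regular subgroup, and not every automorphism of the base lifts, so one cannot invoke the Cayley property of $X_{N}$ in the abstract. The crucial leverage is the normality $N\trianglelefteq G$, which guarantees that $\Lambda=G/N$ is a \emph{transitive} group of \emph{liftable} automorphisms of $X_{N}$; the real work is to produce a regular subgroup inside $\Lambda$, or equivalently to verify, via the lifting criterion for $\mathbb Z_{p}$-covers phrased in terms of the net voltages of the fundamental cycles, that the regular group $H$ of $X_{N}=\mathrm{Cay}(H,S)$ respects the voltage assignment up to an automorphism of $\mathbb Z_{p}$. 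I expect this to require the special features of the present setting: $p$ prime (so $\mathrm{Aut}(\mathbb Z_{p})$ is cyclic and the cohomology is tightly constrained), the vertex-stabiliser in $G$ being a $2$-group (of order $2$ once the attachment set has size at least $3$, by Proposition \ref{3}), and the explicit Cayley presentation of $X_{N}$, which together determine the extension $1\to N\to\widetilde{R}\to R\to 1$ and force $\widetilde{R}$ to be regular.
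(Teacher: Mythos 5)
Your proof of the first assertion is complete and correct: normality of $N$ in the vertex-transitive group $\mathrm{Aut}(X)$ forces all $N$-orbits to have equal length, faithfulness and $|N|=p$ then give semiregularity on vertices, the fact that no automorphism of a half-arc-transitive graph interchanges adjacent vertices gives semiregularity on edges, and the equal-valency hypothesis turns the surjective local projection into a bijection. This matches the paper in substance; the paper instead obtains $N_v=1$ by noting that the kernel of the action on $V(X_N)$ fixes the neighbourhood of $v$ pointwise (using the valency hypothesis) and then invokes connectivity, and it obtains edge-semiregularity from $|N|$ prime together with edge-transitivity, but the conclusions and the overall route are the same.

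The second assertion is not proved, and you say so yourself. The conditional argument you give --- if a regular subgroup $R\le\mathrm{Aut}(X_N)$ lifts, then its full preimage $\widetilde R$ has order $p\,|V(X_N)|=|V(X)|$ and is transitive, hence regular, so $X$ is a Cayley graph --- is correct, but the existence of a liftable regular subgroup is the entire content of the claim and is left open. For comparison: the paper's own proof takes exactly this route. It writes $X\cong X_N\times_\xi\mathbb Z_p$ with $X_N\cong\mathrm{Cay}(G,S)$, posits ``$\tilde G$ a lift of $G$'' satisfying $\tilde\alpha p=p\alpha$, and then checks that $\tilde G$ is transitive and semiregular on $V(X)$; at no point does it verify that the regular group $G$ actually lifts, i.e.\ that $G$ is contained in the group $\mathrm{Aut}(X)/N$ of liftable automorphisms, or equivalently that $G$ respects the net voltages of the fundamental cycles up to an automorphism of $\mathbb Z_p$. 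None of the ingredients you anticipate needing --- $p$ prime, Proposition \ref{3}, the explicit Cayley presentation --- is in fact invoked in the paper to close this step. So your proposal reproduces the paper's argument together with its unaddressed point; a complete proof would have to justify the lifting, for instance by showing that the transitive subgroup $\mathrm{Aut}(X)/N\le\mathrm{Aut}(X_N)$ itself contains a regular subgroup in the situations where the lemma is applied.
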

\begin{proof}
Let $N$ be a normal subgroup of $A:=\rm {Aut(X)}$ and  $X_{N}$ be the quotient graph of $X$ with respect to the orbits of $N$ on $V(X)$. Assume that  $K$ is the kernel of $A$ acting on $V(X_{N})$. The stabilizer $K_{v}$ of $v \in V(X)$ in $K$ fixes the neighborhood of $v$ in $X$. The
connectivity of $X$ implies $K_{v}=1$ for any $v \in V(X)$ and hence  $N_{v}=1$.  If $N_{\{ \alpha,\beta\}} \neq 1$ then $N_{\{ \alpha,\beta\}}=N$. Since $X$ is  connected, there is a $\{ \beta, \gamma\} \in E(X)$ where $\beta,\gamma \in V(X)$. Then we have $g \in A$ such that $\{ \alpha,\beta\}=\{ \beta, \gamma\}^{g}$ because $X$ is an edge-transitive graph.  Hence $N_{\{ \alpha,\beta\}}=N_{\{ \beta, \gamma\}^g}= g^{-1}N_{\{ \beta, \gamma\}}g= N_{\{ \beta, \gamma\}}$. It is a contradiction and so $N_{\{ \alpha,\beta\}}=1$. Therefore $X$  is a $\mathbb Z_{p}$-regular covering of $X_{N}$. Now we prove that $X$ is a Cayley graph.
Let $X_{N}\cong \rm{Cay(G,S)}$, $X\cong X_{N} \times_{\xi}\mathbb Z_{p}$ where $\xi$ is the $T$-reduced voltage assignment  and $\tilde{G}$ is a lift of $G$ such that $\tilde{\alpha}p=p\alpha$ where $p: X \rightarrow X_{N}$ is regular covering projection, $\alpha \in \rm {Aut(X_{N})}$ and $\tilde{\alpha} \in A$.  For any $(x,k) , (y,k') \in V(X)$ where $k, k' \in \mathbb Z_{p}$ and $x,y \in V(X_{N})$, we have $\alpha \in \rm {Aut(X_{N})}$ such that $x^{\alpha}=y$. For $k^{''} \in \mathbb Z_{p}$, $(x,k)^{\tilde{\alpha}p}=(z, k^{''})^{p}=z$ where $(x,k)^{\tilde{\alpha}}=(z, k^{''})$. Also $(x,k)^{p\alpha} =x^{\alpha}=y$. Then $y=z$ and hence $(y,k), (y, k^{''}) \in p^{-1}(y)$. Therefore $\tilde{G}$ is transitive on $V(X)$. Now, we prove that $\tilde{G}$ is semiregular. Suppose that $(x,k)^{\tilde{\alpha}}=(x,k)$.   Now, since $G$ is semiregular and $\tilde{\alpha}p=p\alpha$, it implies that $x=(x,k)^{\tilde{\alpha}p}=(x,k)^{p\alpha}=x^{\alpha}$. Then $\alpha=1$ and hence $\tilde{\alpha}p=p$. Therefore $\tilde{\alpha} \in \rm{CT(p)}=\mathbb Z_{p}$ and since $\rm{CT(p)}$ is semiregular, it  follows that $\tilde{\alpha}=1$.
\end{proof}
By \cite{pot}, all tetravalent  half-arc-transitive graphs of order $12p$  where $p \leq 53 $ is a prime, are classified. Then in the following, we may assume that $p > 53$.
\begin{lemma}\label{11}
Let $X$ be a tetravalent half-arc-transitive graph of order $12p$, where $p$ is a prime. Then $\rm {Aut(X)}$ has a normal Sylow $p$-subgroup or $X$ is $\mathbb Z_{3}$-regular covering of $C(2;p,2)$ or $C_{2p}[2K_{1}]$.
\end{lemma}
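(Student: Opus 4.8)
The plan is to analyze a Sylow $p$-subgroup $P$ of $A:=\mathrm{Aut}(X)$ and show that either $P$ is normal in $A$, or the quotient structure forces $X$ to be a $\mathbb Z_3$-regular cover of one of the two named graphs. Since $|V(X)|=12p$ and the vertex-stabilizer has order $2$ by Proposition \ref{3} (the attachment number of a connected tetravalent half-arc-transitive graph is at least $3$ once $p>53$ is large, as the radius cannot be too small), we get $|A|=24p$. Hence $|P|=p$, so $P\cong\mathbb Z_p$ and $P$ is a Sylow $p$-subgroup. First I would count Sylow $p$-subgroups via Sylow's theorem: the number $n_p$ divides $24$ and satisfies $n_p\equiv 1\pmod p$. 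For $p>53$ the only divisors of $24$ that are $\equiv 1\pmod p$ are $1$ and possibly $24$ (when $p\mid 23$, impossible) — so in fact $n_p=1$ and $P\trianglelefteq A$, unless some small-divisor coincidence occurs. The delicate point is ruling out the sporadic values of $n_p\in\{2,3,4,6,8,12,24\}$ with $n_p\equiv1\pmod p$: for $p>53$ none of these can be congruent to $1$ modulo $p$ except $n_p=1$, which immediately gives the first alternative.

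So the real content is the branch where $P$ is \emph{not} normal, which I expect to arise only if my order count $|A|=24p$ fails — i.e.\ if the vertex-stabilizer is larger or $A$ has a different shape. In that case the strategy is to pass to a minimal normal subgroup $N$ of $A$. If $N$ is elementary abelian and intransitive, I would form the quotient graph $X_N$ and use Lemma \ref{9} to transfer the half-arc-transitivity downstairs. The key observation is that a normal subgroup of order coprime to $p$, acting semiregularly with the quotient retaining valency $4$, must be $N\cong\mathbb Z_3$: here I would argue that $N$ cannot be a $2$-group (this would reduce valency or contradict the half-arc-transitive structure via the attachment analysis), and cannot contain $P$ unless $P\trianglelefteq A$. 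Once $N\cong\mathbb Z_3$, the quotient $X_N$ is a tetravalent graph on $4p$ vertices.

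The identification of $X_N$ is where Proposition \ref{5} and the structure of tetravalent graphs of order $2pq$ (here with $q=2$, giving order $4p$) come in. A tetravalent $A/N$-edge-transitive graph of order $4p=2\cdot 2\cdot p$ that is \emph{not} itself half-arc-transitive in the tightly-attached sense must, by the classification of such small tetravalent vertex- and edge-transitive graphs, be either the Praeger–Xu graph $C(2;p,2)$ or the lexicographic product $C_{2p}[2K_1]$ — precisely the two graphs singled out in the statement. I would verify that the quotient $X_N$ has valency $4$ (so $N$ acts with the covering property, invoking Lemma \ref{10}'s argument that $N_v=1$ and $N_{\{u,v\}}=1$), making $X$ a genuine $\mathbb Z_3$-regular cover.

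The hard part will be the case analysis that excludes every other possibility for $N$ and for $X_N$: specifically, showing that no minimal normal subgroup of $A$ can be insoluble. For this I would invoke Proposition \ref{4}: any nonabelian composition factor of $A$ is a simple group whose order has at most three prime divisors (since $|A|$ divides $24p$, its primes are among $2,3,p$), so it lies on Gordon's list; but for $p>53$ none of $\mathrm{A_5},\mathrm{A_6},\mathrm{PSL}(2,7),\ldots$ has order divisible by such a large prime $p$, forcing $A$ to be soluble. Solubility then guarantees a minimal normal subgroup that is elementary abelian, and the preceding argument pins it down to $\mathbb Z_p$ (giving normal Sylow $p$-subgroup) or $\mathbb Z_3$ (giving the covering alternatives). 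Assembling these—Sylow counting, solubility via Proposition \ref{4}, and the order-$4p$ quotient classification via Proposition \ref{5}—yields the trichotomy in the statement.
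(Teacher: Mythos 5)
Your overall skeleton (pass to a minimal normal subgroup $N$, use Proposition \ref{4} with $p>53$ to force solvability, split by the prime dividing $|N|$, and read off the quotient) is the same as the paper's, but three of your steps have genuine gaps. First, you cannot start from $|A|=24p$: Proposition \ref{3} gives $|A_v|=2$ only when a $G$-attachment set has size at least $3$, and you have not established that; a priori the attachment number could be $1$ or $2$, and the paper therefore only assumes $|A|=2^{m+2}\cdot 3\cdot p$ with $m\geq 0$ unknown. Your Sylow count $n_p\mid 24$ collapses with it, so the ``easy branch'' of your argument does not exist; the entire proof must live in the minimal-normal-subgroup analysis.

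Second, your dismissal of the case where $N$ is a $2$-group (``this would reduce valency or contradict the half-arc-transitive structure'') is precisely where most of the paper's work lies: one must treat quotients of order $6p$ and $3p$, separate the half-arc-transitive and arc-transitive possibilities, invoke the classifications of tetravalent edge-transitive graphs of order $2pq$ and $3p$, and iterate the quotient construction on $C_{3p}[2K_1]$ before reaching contradictions. None of that is replaced by an attachment argument. Third, in the case $N\cong\mathbb Z_3$ you invoke Proposition \ref{5} for the order-$4p$ quotient, but that proposition concerns order $2pq$ with $p,q$ \emph{odd} distinct primes and does not apply to $4p$; the correct tool is Zhou's classification of tetravalent $s$-transitive graphs of order $4p$ together with Proposition \ref{2}, and the resulting list contains not only $C(2;p,2)$ and $C_{2p}[2K_1]$ but also $CA^{0}_{4p}$, $CA^{1}_{4p}$ and the half-arc-transitive graph $X(4,p)$. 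Excluding these extra candidates (especially $X(4,p)$, which requires its own quotient analysis and the fact that $X(4,p)$ is not a Cayley graph) is essential to the lemma and is absent from your proposal.
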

\begin{proof}
Let $X$ be a tetravalent half-arc-transitive graph of order $12p$ where p is a prime.  Let $A:=\rm {Aut(X)}$.  Since the stabilizer $A_{v}$ of $v \in V(X)$ is a 2-group, we have $|A|= 2^{m+2}.3.p$, for some nonegative integer $m$.
  Suppose to the contrary that $A$ has no normal Sylow p-subgroups. Let $N$ be a minimal normal subgroup of $A$. We claim that $N$ is solvable. Otherwise,   by Proposition \ref{4} and since $p > 53$, we get a contradiction.   Then  $N$ is solvable   and hence it is an elementary abelian 2-,3- or p-group. \\
\textbf{Case I:}  $N$ is a 2-group.

Let  $X_{N}$ be the quotient graph of $X$ corresponding to the orbits of $N$ on $V(X)$. Then $|V(X_{N})|= 6p$ or $3p$.\\
\textbf{Subcase 1:} $|V(X_{N})|= 6p$.

Since $X$ is edge-transitive, $X_{N}$ has valency $2$ or $4$. Suppose  that $X_{N}$ has valency $2$. Then $X\cong C_{6p}[2K_{1}]$, which is arc-transitive. It is a contradiction. Assume now that $X_{N}$ has valency $4$. If $X_{N}$ is half-arc-transitive then by  [\cite{feng}, Theorem 4.1], $|\rm{Aut(X_{N})}|= 2^{2}.3.p$. Let $K$ be the kernel of $A$ acting on $V(X_{N})$. Since $K$ fixes each orbit of $N$,  the stabilizer  $K_{v}=1$ for any $v \in V(X)$. Then $|N|=|K|$. On the other hand $A/K \leqslant \rm{Aut(X_{N})}$. Since $A/K$ acts transitively on $V(X_{N})$ and $E(X_{N})$, $|A|=24p$. Then $1+np$ $|$ $24$. Since $p > 53$ then $P \trianglelefteq A$, a contradiction.
Now, suppose that $X_{N}$ is arc-transitive. Let $X_{N}$ has valency $4$. By Proposition \ref{5}, if $X_{N}$ is arc-regular then $|\rm{Aut(X_{N})}|= 24p$. By lemma \ref{9}, $A/K$ is half-arc-transitive and hence $|A|=24p$. Then $P \trianglelefteq A$ because $p > 53$ . It is a contradiction. If $X_{N}$ not be arc-regular then by Proposition \ref{5}, $Y=X_{N}\cong C_{3p}[2K_{1}]$ and $B= \rm {Aut(Y)}$.  $|B|= 2^{3p+1}.3.p$.
 Assume that  $M$ is a minimal normal subgroup of $B$. By the same argument as in the first paragraph, $M$  is solvable and hence it is an elementary abelian 2-,3- or p-group. First, assume that $M$ is a 2-group and   $Y_{M}$ is the quotient graph of $Y$ corresponding to the orbits of $M$ on $V(Y)$.  The quotient graph $Y_{M}$ has order $3p$ and valency $2$ or $4$. If $Y_{M}$ has valency  $4$ then $M_{v}=1$ for $v \in V(Y)$. Assume that $K_{1}$ be the kernel of $B$ acting on $V(Y_{M})$.  Hence $|K_{1}|=|M|$. Thus $B/K_{1} \leqslant \rm{Aut(Y_{M})}$.  It is a contradiction because      $\rm {|Aut(Y_{M})}|= 12p$ by [\cite{my},  Theorem 5]. If $Y_{M}$ has valency  $2$ then  $Y_{M} \cong C_{3p}$ and  $\rm {Aut(Y_{M})} \cong D_{6p}$.  Since $|K_{1}| \leq  2$, we have $|B| \leq 12p$. We get a contradiction because $p > 53$. Now,  suppose that $M$ be a 3-group. Then $|V(Y_{M})|= 2p$. Since $M_{v}=1$ for $v \in V(Y)$ by using  [\cite{gard}, Theorem 1.1(4)], $Y_{M}$ has valency $4$. By [\cite{chao}, Table 1], $Y_{M} \cong G(2,p,r)$ or $G(2p,r)$.  Then $|K_{1}|=|M|$ and hence $B/K_{1} \leqslant \rm{Aut(Y_{M})}$. It is a contradiction because $|\rm {Aut(Y_{M})}|= 2^{p+1}.p$ or $8p$ and $p > 53$. Let $M$ be a p-group. Then $|Y_{M}|= 6$. Since $M_{v} \leq M$ we have $|M_{v}|=1$. By [\cite{gard}, Theorem 1.1(4)], $Y_{M}$ has valency $4$. By \cite{pot},  $|\rm{Aut(Y_{M})}|= 48$. Hence $B/K_{1} \leqslant \rm{Aut(Y_{M})}$. It is a contradiction.
\\
\textbf{Subcase 2:} $|V(X_{N})|= 3p$.

Let  $|V(X_{N})|= 3p$ and $X_{N}$ has valency $2$. Then $X\cong C_{3p}[2K_{1}]$. This leads to a contradiction. If  $X_{N}$ has valency $4$ and it  is half-arc-transitive then  by [\cite{Als}, Theorem 2.5], $|\rm{Aut(X_{N})}|= 6p$. Since $X_{N}$ is an edge-transitive graph, $6p~|~|A/K|~|6p$. Then $|A|=24p$ and hence $P \trianglelefteq A$. It is a contradiction. Suppose now that  $X_{N}$ is arc-transitive. By [\cite{my},  Theorem 5], $|\rm{Aut(X_{N})}|= 12p$. Then with the same arguments as before,  a contradiction can be obtained. \\
\textbf{Case II:} $N$ is 3-group.

If  $|V(X_{N})|= 4p$ and $X_{N}$ has valency $2$, then  $X_{N}\cong C_{4p}$ and hence $\rm{Aut(X_{N})}\cong D_{8p}$. Since $K=K_{v}N$ for any $v\in V(X)$ and  $K$ acts faithfully on $V(X)$, we have $K \leqslant S_{3}$ and hence $K_{v}\leq 2$. Then  $|A|\mid 48p $. Therefore $P \trianglelefteq A$ because according to assumption $p > 53$. This leads to a contradiction. Now let $|V(X_{N})|= 4p$  and $X_{N}$ has valency $4$. Then $X_{N}$ is arc-transitive or half-arc-transitive. By   [\cite{zhoo}, Table 1] and Proposition \ref{2},    $X_{N}\cong C(2;p,2)$, $C_{2p}[2K_{1}]$, $CA^{0}_{4p}$,  $CA^{1}_{4p}$  or $X(4,p)$. Let  $X_{N}\cong C(2;p,2)$ or $C_{2p}[2K_{1}]$.  Since $X_{N}$ has valency $4$, $N$ acts semiregularly on $V(X)$ and so $X$ is a $\mathbb Z_{3}$-regular covering of $C(2;p,2)$ or $C_{2p}[2K_{1}]$. Assume that $Y=X_{N}\cong  CA^{0}_{4p}$ or $CA^{1}_{4p}$ and $B= \rm{Aut(Y)}$. Since $|K|=|N|$, we have $A/K \leq B$ and hence $|A| \leq 48p$. Then $P \trianglelefteq A$.    Suppose that $Y=X_{N}\cong  X(4,p)$ and $B= \rm{Aut(Y)}$. Since $Y$ is half-arc-transitive, we have $|B|=  2^{m+2}.p$, for some nonegative integer $m$. Let  $M$ be a minimal normal subgroup of $B$. Thus $M$ is an elementary abelian 2- or p-group. First, assume that $M$ be a p-group and   $Y_{M}$ be the quotient graph of $Y$ corresponding to the orbits of $M$ on $V(Y)$. Then $|V(Y_{M})|= 4$. Since $Y$ is an edge-transitive graph and $M_{v}=1$ for $v \in V(Y)$,  $Y_{M}$ has valency $4$, a contradiction. Suppose that $M$ is a 2-group. Therefore $|V(Y_{M})|= 2p$ or $ p$ and $Y_{M}$ has   valency $2$ or $4$.\\
\textbf{Subcase 1:} $|V(Y_{M})|= 2p$.

If  $Y_{M}$ has   valency $2$ then $Y\cong C_{2p}[2K_{1}]$, which is arc-transitive.  Since $Y$ is half-arc-transitive, we get a contradiction. Suppose now that $Y_{M}$ has   valency $4$. By [\cite{ox}, Table 1], $Y_{M}\cong G(2p,4)$ or  $G(2,p,2)$. Assume that  $Y_{M}\cong G(2p,4)$. Since $(K_{1})_{v}=1$, $|B/ K_{1}| \leqslant 8p$ and hence $|A| \leq 48 p$. It is a contradiction  because $p > 53$. Suppose that $Y_{M}\cong G(2,p,2)$. Let $Z= Y_{M} \cong G(2,p,2)$ and $C= \rm{Aut(Z)}$. Let $H$ be a minimal normal subgroup of $C$ and let $Z_{H}$ be the quotient graph of $Z$ with respect to the orbits of $H$. Since $|C|=2^{p+1}.p$,   $H$ is 2- or p-group.  Assume that $H$ is a 2-group. Thus  $|Z_{H}|=p$ and $Z_{H}$ has valency $2$ or $4$. By [\cite{chao}, Theorem 3],  $|\rm{Aut(Z_{H})}|=2p$ or $4p$. Assume that $K_{1}$ be the kernel of $C$ acting on $V(Z_{H})$. If  $Z_{H}$ has valency $4$ then $|K_{1}|=|H|=2$ because  $|(K_{1})_{v}| =1$. Then $C/ K_{1} \leqslant 16p$ and hence $2^{p+1}\leq 8p$. We get a contradiction because $p > 53$.  If  $Z_{H}$ has valency $2$ then $|K_{1}|\leq 8$ because  $|(K_{1})_{v}| \leq 2$. Thus $C/ K_{1} \leqslant 16p$ and hence $2^{p+1}\leq 8p$, a contradiction can be obtained. Now, suppose that $H$ is a p-group. Then $|Z_{H}|=2 $ with valency $2$, $4$, a contradiction.
 \\
 \textbf{Subcase 2:} $|V(Y_{M})|= p$.

 If  $Y_{M}$ has   valency $4$ then by lemma \ref{10}, $Y$  is $\mathbb Z_{2}$-regular cover of $Y_{M}$ and $Y$ is a Cayley graph. But by \cite{wang}$,   X(4,p)$ is not a Cayley graph, a contradiction. Suppose that $Y_{M}$ has  valency $2$ and hence $Y_{M}\cong C_{p}$. Assume that $K_{1}$ is the kernel of $B$ acting on $V(Y_{M})$ and $(K_{1})_{v} = 1$. Then $B/K_{1} \leqslant \rm{Aut(Y_{M})}$ and so $|B| \leq 8p$. Therefore $|A| \leq 24p$ and hence $P \trianglelefteq A$ because $p > 53$. Then $(K_{1})_{v} \neq  1$. Let $V(Y _{M})=\{ \Omega_{0},\Omega_{1},\Omega_{2},..., \Omega_{p-1}\}$. The subgraph induced by any two adjacent orbits is either a cycle of length $8$ or a union of two cycles of length $4$. Suppose that $\langle \Omega_{i}\cup \Omega_{i+1} \rangle$ is an 8-cycle.  Thus $K_{1}$ acts faithfully on each $\Omega_{i}$ and hence  $(K_{1})_{v} \cong \mathbb Z_{2}$. It implies that $|K_{1}|=8$.  Since $M$ is transitive on each $\Omega_{i}$ and $(K_{1})_{v} > 1$, all edges
in the induced subgraph $\langle \Omega_{i}\cup \Omega_{i+1} \rangle$ have the same direction either from $\Omega_{i}$ to $\Omega_{i+1}$
or from $\Omega_{i+1}$ to $\Omega_{i}$ in the oriented graph $D_{B}(Y)$.  It follows that  $B/K_{1} \cong \mathbb Z_{p}$ and $|B| \leq 8p$. Therefore $|A| \leq 24p$ and hence $P \trianglelefteq A$ because $p > 53$. Assume that $\langle \Omega_{i}\cup \Omega_{i+1} \rangle$  is a union of two $4$-cycles. Let $\Omega_{i}= \{ u_{i}^{0}, u_{i}^{1}, u_{i}^{2}, u_{i}^{3}\}$ for any $i$ in $\mathbb Z_{p}$. Then $B$ has an automorphism $\alpha$ of order $p$  such that for any $i$ in $\mathbb Z_{p}$, $\Omega_{i}^{\alpha}=\Omega_{i+1}$. Let $(u_{i}^{j})^{\alpha}=u_{i+1}^{j}$ for $i$ in $\mathbb Z_{p}$ and $j$ in $\mathbb Z_{4}$. Consider a 4-cycle $C$ in  the induced subgraph $\langle \Omega_{0}\cup \Omega_{1} \rangle$ and let $n$ be the number of edges of $C$ which are in some orbit of $\alpha$. Then  $n=0,1$, or $2$. Consequently,  the induced subgraph $\langle \Omega_{0}\cup \Omega_{1} \rangle$ is one of the  of the following three cases.

 In the   Case $1$, $Y$ is disconnected, a contradiction.  In the   Case $2$, $Y\cong C_{2p}[2K_{1}]$.  We get a contradiction because  $Y\cong X(4,p)$. In the   Case $3$, $Y\cong C(2;p,2)$ that is arc-transitive. It is  a contradiction because $X(4,p)$ is a half-arc-transitive graph.

 \begin{center}
    \includegraphics[width=1\textwidth]{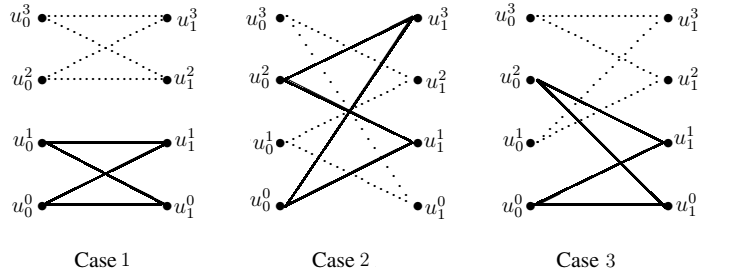}\\
   Fig 3. The induced subgraph $\langle \Omega_{0}\cup \Omega_{1} \rangle$.
  \end{center}
\textbf{Case III:} $N$ is p-group.

If $|N|=p $ then $N$ is a normal Sylow $p$-subgroup of $A$ as claimed.
\end{proof}
\begin{theorem}
Let $X$ be a connected  tetravalent vertex-transitive
and edge-transitive graph  of order $12p$, where $p >53$ is a prime. Then one of the following
statements holds:\\
(1) $X$ is half-arc-transitive if and only if $X\cong X(r;12,p)$ such that $r \in \mathbb Z^{*}_{ p}$ satisfying $r^{12} = \pm 1$.\\
(2) $X$ is half-arc-transitive Cayley graph  if and only if $X \cong Y \times_{{\xi}}  \mathbb Z_{p}  $, where $Y = W(6,2)$ or $ R_6(5,4)$ and $\xi : A(Y) \rightarrow  \mathbb Z_{p}$.\\
(3) if $X$ is half-arc-transitive then $X\cong C(2;p,2) \times_{{\xi}} \mathbb Z_{3}$, where $\xi : A(C(2;p,2)) \rightarrow  \mathbb Z_{3}$.\\
(4) if $X$ is half-arc-transitive then $X$ is a Cayley graph and $X\cong C_{2p}[2K_{1}] \times_{{\xi}} \mathbb Z_{3}$, where $\xi : A(C_{2p}[2K_{1}]) \rightarrow  \mathbb Z_{3}$.

\end{theorem}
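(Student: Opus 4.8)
The plan is to run the whole classification off the dichotomy supplied by Lemma \ref{11}. Writing $A := \mathrm{Aut}(X)$, the stabilizer $A_v$ is a $2$-group, so a normal Sylow $p$-subgroup $P$ (when it exists) is cyclic of order $p$ and acts semiregularly on $V(X)$. Lemma \ref{11} thus leaves two possibilities: either (I) $P \cong \mathbb{Z}_p$ is normal in $A$ with quotient graph $X_P$ on $12$ vertices, or (II) $X$ is a $\mathbb{Z}_3$-regular covering of $C(2;p,2)$ or of $C_{2p}[2K_1]$. Branch (II) gives statements (3) and (4) at once: realizing the covering through a $T$-reduced voltage assignment yields $X \cong C(2;p,2)\times_{\xi} \mathbb{Z}_3$ or $X \cong C_{2p}[2K_1]\times_{\xi} \mathbb{Z}_3$; and since $C_{2p}[2K_1]$ is a Cayley graph of the same valency as $X$, the argument of Lemma \ref{10} (with the normal $\mathbb{Z}_3$ in the role of $\mathbb{Z}_p$) shows that the second cover is a Cayley graph, which is statement (4).

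In branch (I) the semiregularity of $P$ together with connectedness forces the valency of $X_P$ to be $2$ or $4$, and I would treat these separately. If $X_P$ has valency $2$ then $X_P \cong C_{12}$, and a short argument, using that $p$ is prime and larger than $12$, identifies the $\mathbb{Z}_p$-orbits as the attachment sets, so that $X$ is tightly attached of odd radius $p$. Proposition \ref{1} then forces $X \cong X(r;12,p)$ with $r^{12} = \pm 1$; as $p > 53$ the exceptional items (2) and (3) of Proposition \ref{1} cannot occur, so $X$ is half-arc-transitive exactly when $r^{2} \neq \pm 1$. This is statement (1), the reverse implication being immediate from Proposition \ref{1}.

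If instead $X_P$ has valency $4$, then $X$ is a $\mathbb{Z}_p$-regular covering of the tetravalent graph $X_P$ on $12$ vertices, and Lemma \ref{9} says that $X$ is half-arc-transitive if and only if $X_P$ carries the half-arc-transitive action of $A/P$. Since no graph on fewer than $27$ vertices is half-arc-transitive, $X_P$ must be arc-transitive under its full automorphism group while admitting $A/P$ as a group with exactly two arc-orbits; sifting the finite list of tetravalent arc-transitive graphs on $12$ vertices from \cite{pot} and \cite{zhoo} leaves only $W(6,2)$ and $R_{6}(5,4)$. Each of these is a Cayley graph, so Lemma \ref{10} upgrades the cover to a half-arc-transitive Cayley graph $X \cong Y\times_{\xi} \mathbb{Z}_p$ with $Y \in \{\, W(6,2),\, R_{6}(5,4)\,\}$, giving statement (2). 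For the converse directions in (1) and (2) I would check that the listed graphs are genuinely half-arc-transitive: in (1) this is Proposition \ref{1} once $r^{2} \neq \pm 1$, and in (2) it amounts to exhibiting, for each of $W(6,2)$ and $R_{6}(5,4)$, a $T$-reduced voltage assignment $\xi$ whose lifted group retains vertex- and edge-transitivity but not arc-transitivity.

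I expect the valency-$4$ subcase of branch (I) to be the main obstacle. Two points need care there. First, one must establish that $W(6,2)$ and $R_{6}(5,4)$ are the only tetravalent graphs on $12$ vertices admitting a half-arc-transitive group of automorphisms; this rests on a complete enumeration of the relevant small graphs. Second, for the biconditional one must produce voltage assignments $\xi$ over $\mathbb{Z}_p$ whose covers are provably half-arc-transitive rather than accidentally arc-transitive, which requires a direct analysis of the lifted automorphisms. By contrast, the tightly-attached step in the valency-$2$ subcase is routine once the $\mathbb{Z}_p$-orbits are known to be the attachment sets, after which Proposition \ref{1} does all the work.
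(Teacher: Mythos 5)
Your proposal follows essentially the same route as the paper: the dichotomy of Lemma \ref{11}, the quotient $X_P$ of valency $2$ (tightly attached of odd radius $p$, giving $X(r;12,p)$ via Proposition \ref{1}) or valency $4$ (arc-transitive on $12$ vertices, giving $W(6,2)$ or $R_6(5,4)$ and Cayley covers via Lemmas \ref{9} and \ref{10}), and the $\mathbb{Z}_3$-cover branch for statements (3) and (4). The concern you flag about the converse of statement (2) --- that one must exhibit voltage assignments whose covers are genuinely half-arc-transitive rather than arc-transitive --- is legitimate, and the paper itself resolves it only by citing computational evidence that $W(6,2)$ and $R_6(5,4)$ admit half-arc-transitive subgroups and then appealing to Lemma \ref{9}.
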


\begin{proof}
Let $X$ be a tetravalent half-arc-transitive graph of order $12p$ and hence $|A|= 2^{m+2}.3.p$ for some integer $m\geq 0$. By Lemma \ref{11}, either $P \trianglelefteq A$  or $X$ is a $\mathbb Z_{3}$-regular covering of $C(2;p,2)$ or $C_{2p}[2K_{1}]$. First, suppose that $P \trianglelefteq A$. Now,  let $X_{P}$ be the quotient graph of $X$ corresponding to the orbits of $P$. Assume that $K$ is the kernel of $A$ acting on $V(X_{P})$. Then $V(X_{P})=12$ and $X_{P}$ has valency $2$ or $4$. If $X_{P}$ has valency $2$ then $X_{P}\cong C_{12}$ and hence $\rm{Aut(X_{P}})\cong D_{24}$. By Proposition \ref{3}, $A_{v}\cong \mathbb Z_{2}$ and hence $|A|=24p$. The attachment number of $X$ is equal to its radius. So $X$ is a tetravalent tightly attached half-arc-transitive graph of odd radius $p$.  By Proposition \ref{1}, $X\cong X(r;12,p)$ and  $|A|=24p$.  Also, by Proposition \ref{1},  it is trivial  that $ X(r;12,p)$ is  tetravalent half-arc-transitive graph of order $12p$.
Assume that  $X_{P}$ has valency $4$ and $X_{P}$ is arc-transitive or half-arc-transitive. There is no half-arc-transitive graph of order $12$. Suppose that $X_{P}$ is an arc-transitive graph. By \cite{pot}, $W(6,2)$ and $R_6(5,4)$ are the only   two arc-transitive graphs of order $12$.  These graphs  are  Cayley graphs by \cite{sage}. Since $P$ acts semiregular on $V(X)$ and $E(X)$, by Lemma \ref{10}, $X$ is a $\mathbb Z_{p}$-regular covering of $X_{P}$ and $X$ is a Cayley graph. For convenience,  consider the graphs  $W(6,2)$ and $R_6(5,4)$. By \cite{sage},  these graphs  have half-arc-transitive subgroups. By Lemma \ref{9}, since $X_{P}$ is $A/P$-half-arc-transitive then $X$ is half-arc-transitive. Now suppose that $X$ is half-arc-transitive which is  $\mathbb Z_{3}$-regular covering of $X_{P}$. By Lemma \ref{11} and Lemma \ref{10},   the cases $(3)$ and $(4)$ holds.

\end{proof}


\end{document}